\newcommand{\newsection}[1]{\setcounter{equation}{0} \section{#1}}
\numberwithin{equation}{section}
\newtheorem{propn}{Proposition}[section]
\newtheorem{thm}[propn]{Theorem}
\newtheorem{lemma}[propn]{Lemma}
\newtheorem*{thm*}{Theorem}
\theoremstyle{definition}
\newtheorem{defn}[propn]{Definition}
\newtheorem{rem}[propn]{Remark}
\newcommand{\Hil}{\mathcal{H}}
\newcommand{\Z}{\mathbb{Z}_+}
\newcommand{\sz}{\mathbb{S}}
\newcommand{\T}{\mathcal{T}}
\newcommand{\p}{\mathcal{P}}
\newcommand{\h}{\hat{T}}
\newcommand{\Comp}{\mathbb{C}}
 \newcommand{\D}{\mathbb{D}}
 \newcommand{\Dt}{D_{T}}
 \newcommand{\dt}{\cld_{T}}
 \DeclareMathOperator{\Ran}{Ran}
\newcommand{\vp}{\varphi}
\newcommand{\clb}{\mathcal{B}}
\newcommand{\cld}{\mathcal{D}}
\newcommand{\cle}{\mathcal{E}}
\newcommand{\clf}{\mathcal{F}}
\newcommand{\clh}{\mathcal{H}}
\newcommand{\clk}{\mathcal{K}}
\newcommand{\cll}{\mathcal{L}}
\newcommand{\clp}{\mathcal{P}}
\newcommand{\clq}{\mathcal{Q}}
\newcommand{\cls}{\mathcal{S}}
\newcommand{\SA}{\mathcal{S}\mathcal{A}}
\newcommand{\z}{\bm{z}}
\newcommand{\w}{\bm{w}}
\newcommand{\raro}{\rightarrow}
\newcommand{\NI}{\noindent}
\begin{document}

\title[{Finite rank
commuting contractions}]{Isometric dilations and von Neumann
inequality for finite rank commuting contractions}

\author[Barik]{Sibaprasad Barik}
\address{Department of Mathematics, Indian Institute of Technology Bombay, Powai, Mumbai, 400076, India}
\email{sibaprasadbarik00@gmail.com}

\author[Das] {B. Krishna Das}
\address{Department of Mathematics, Indian Institute of Technology Bombay, Powai, Mumbai, 400076, India}
\email{dasb@math.iitb.ac.in, bata436@gmail.com}

\author[Sarkar]{Jaydeb Sarkar}
\address{Indian Statistical Institute, Statistics and Mathematics Unit, 8th Mile, Mysore Road, Bangalore, 560059, India}
\email{jay@isibang.ac.in, jaydeb@gmail.com}

\subjclass[2010]{47A13, 47A20, 47A56, 47B38, 14M99, 46E20, 30H10}
\keywords{von Neumann inequality, isometric dilations, inner
multipliers, Schur-Agler class, Hardy space, distinguished variety}


\begin{abstract}
Motivated by Ball, Li, Timotin and Trent's Schur-Agler class version of commutant lifting
theorem, we introduce a class, denoted by $\mathcal{P}_n(\mathcal{H})$, of $n$-tuples of commuting contractions on a Hilbert space $\mathcal{H}$. We always assume that $n \geq 3$. The importance of this class of $n$-tuples stems from the fact that the von Neumann inequality or the existence of isometric dilation does not hold in general for $n$-tuples, $n \geq 3$, of commuting contractions on Hilbert spaces (even in the level of finite dimensional Hilbert spaces). Under some rank-finiteness assumptions, we prove that tuples in $\mathcal{P}_n(\mathcal{H})$ always admit explicit isometric dilations and satisfy a refined von Neumann inequality in terms of algebraic varieties in the closure of the unit polydisc in $\mathbb{C}^n$.
\end{abstract}

\maketitle

\newsection{Introduction}\label{sec-intro}

In this paper we deal with the problem of isometric dilations and (refined) von-Neumann inequality for $n$-tuples ($n \geq 3$) of commuting contractions. For notational convenience, we denote by $\T^n(\clh)$ the set of all ordered $n$-tuples of commuting contractions on a Hilbert space $\clh$:
\[
\T^n(\clh) = \{(T_1, \ldots, T_n): T_i \in \clb(\clh),  \|T_i\| \leq
1, T_i T_j = T_j T_i, 1 \leq i, j \leq n\},
\]
where $\clb(\clh)$ denotes the set of all bounded linear operators
on $\clh$. Let $V = (V_1, \ldots, V_n) \in \T^n(\clk)$ be a tuple of commuting isometries (that is, $V_i^* V_i = I_{\clk}$ for all $i=1, \ldots, n$) on a Hilbert space $\clk$, and let $\clk \supseteq \clh$ (or, $\clh$ is isometrically embedded in $\clk$). We say that $V$ is an \textit{isometric dilation} of $T \in \T^n(\clh)$ if
\[
T^{\bm{k}} = P_{\clh} V^{\bm{k}}|_{\clh} \quad \quad (\bm{k} \in \mathbb{Z}_+^n),
\]
where $P_{\clh}$ denotes the orthogonal
projection of $\clk$ onto $\clh$ and
\[
\mathbb{Z}_+^n = \{\bm{k} = (k_1, \ldots, k_n) : k_i \in
\mathbb{Z}_+, i = 1, \ldots, n\},
\]
and for each multi-index $\bm{k} \in \mathbb{Z}_+^n$ and commuting
tuple $A = (A_1, \ldots, A_n)$ on a Hilbert space $\cll$ we denote $A^{\bm{k}} = A_1^{k_1} \cdots A_n^{k_n}$. It is well known that the existence of an isometric dilation of $T \in \T^n(\clh)$ guarantees \cite{NF} the \textit{von-Neumann inequality} for $T$:
\[
\|p(T_1, \ldots, T_n)\|_{\clb(\clh)} \leq \mbox{sup} \{|p(\z)|: \bm{z} \in \overline{\D}^n\},
\]
for all $p \in \mathbb{C}[z_1, \ldots, z_n]$. Now one knows on one hand the existence of isometric dilations of $n$-tuples in $\T^n(\clh)$, $n = 1, 2$, is guaranteed by the celebrated dilation theory of Sz.-Nagy and Foias and Ando. On the other hand, in general, neither the existence of isometric dilation
nor the von Neumann inequality holds for tuples in $\T^n(\clh)$, $n > 2$ (even in the case of $\mbox{dim~} \clh < \infty$). For instance, see the counterexamples by Varopoulous \cite{V}, Crabb and Davie \cite{CD1} and Parrott \cite{Par}.

An intriguing question therefore is to identify those $n$-tuples in $\T^n(\clh)$, $n \geq 3$, which admit isometric dilation (and also satisfy the von Neumann inequality over a variety in $\overline{\D}^n$ or $\D^n$ in the sense of \cite{AM1}, \cite{DS}, \cite{DSS} and \cite{BDHS}). This has turned into one of the most challenging questions in multivariable operator theory and functions of several complex variables. However, the research in this direction seems unexplored except the work of Grinshpan,
Kaliuzhnyi-Verbovetskyi, Vinnikov and Woerdeman \cite{VV} and the
recent paper \cite{BDHS}. Also see Choi and Davidson \cite{CD}, Drury \cite{Dr}, Holbrook \cite{Ho1, Ho2}, Knese \cite{K1} and Kosi\'{n}ski \cite{LK} for relevant examples and results.

The complexity of the above problem is further compounded by a number of related (and equally complex) function theoretic problems in several complex variables like commutant lifting theorem, (Toeplitz) corona theorem, Nevanlinna-Pick interpolation theorem, Caratheodory-Fejer theorem and invariant subspace problem. Here we are particularly interested in the commutant lifting theorem, which is also equivalent to the Ando dilation theorem (cf. \cite{FF}). The commutant lifting theorem in the setting of scalar-valued Hardy space is due to Sarason \cite{Sarason}. We state here the Sz.-Nagy and Foias version \cite{NF} of commutant lifting theorem in the setting of vector-valued Hardy space: Let $\cle$ and $\cle_*$ be Hilbert spaces, $\clq \subseteq H^2_{\cle}(\D)$ and $\clq_* \subseteq H^2_{\cle_*}(\D)$ be closed subspaces and let $X \in \clb(\clq, \clq_*)$. Suppose that $\clq$ and $\clq_*$ are shift co-invariant subspaces (invariant under the adjoint of the multiplication operator $M_z$) of $H^2_{\cle}(\D)$ and $H^2_{\cle_*}(\D)$, respectively, and
\[
X (P_{\clq} M_{z}|_{\clq}) = (P_{\clq_*} M_{z}|_{\clq_*}) X.
\]
Then there exists a bounded holomorphic function $\vp \in H^\infty_{\clb(\cle, \cle_*)}(\D)$ such that $\|X\| = \|\vp\|_{\infty}$ (the uniform norm of $\vp$ over $\D$) and
\[
X^* = M_{\vp}^*|_{\clq_*}.
\]
Here and in what follows, $H^\infty_{\clb(\cle, \cle_*)}(\D^n)$ denotes the set of all bounded $\clb(\cle, \cle_*)$-valued analytic functions on $\D^n$. In connection with the above it is now natural to ask whether the commutant lifting theorem can be extended to the case of vector-valued Hardy space over the polydisc $\D^n$ in $\mathbb{C}^n$. This has been addressed by Ball, Li, Timotin and Trent \cite{BLTT} for a special class, known as Schur-Agler class, of multipliers. To be more specific, let $\cle$ and $\cle_*$ be Hilbert spaces. The Schur-Agler class $\SA_n(\cle, \cle_*)$ consists of $\clb(\cle, \cle_*)$-valued analytic functions $\Phi$ on $\D^n$ such that $\Phi$ satisfies the $n$-variables von Neumann inequality, that is
\[
{\mathcal{S}\mathcal{A}}_n(\cle, \cle_*) = \{ \Phi \in H^\infty_{\clb(\cle,
\cle_*)}(\D^n) : \|\Phi(T)\| \leq 1, T \in \T^n_1 (\clh)
~\text{and}~\clh \text{~a Hilbert space}\},
\]
where $\T^n_1(\clh) = \{T \in \T^n(\clh) : \|T_i \| < 1, i = 1,
\ldots, n\}$. Here, the functional calculus $\Phi(T)$ is given by
\[
\Phi(T) = SOT-\sum_{\bm{k} \in \mathbb{Z}_+^n} \Phi_{\bm{k}} \otimes T^{\bm{k}},
\]
where $\Phi(\z) = \sum_{\bm{k} \in \mathbb{Z}_+^n} \Phi_{\bm{k}} z^{\bm{k}}$ is the Taylor expansion for $\Phi$ centered at the origin in $\mathbb{C}^n$ with $\Phi_{\bm{k}} \in \clb(\cle, \cle_*)$ and $\z^{\bm{k}} = z_1^{k_1} \cdots z_n^{k_n}$ for all $\bm{k} \in \mathbb{Z}_+^n$. The elements of $\SA_n(\cle, \cle_*)$ are called Schur-Agler class functions. It is worth noting that the Schur-Agler class is a proper subset of bounded holomorphic functions in three or more than three variables \cite{CD, Ho1, Ho2, K1, LK, Par, V}:
\[
\SA_n(\cle, \cle_*) \subsetneqq H^\infty_{\clb(\cle, \cle_*)}(\D^n) \quad \quad (n > 2).
\]
We need one more piece of notation. Given $A \in \clb(\clh)$, the \textit{conjugate map} \cite{BLTT} is the completely positive map $C_A : \clb(\clh) \raro \clb(\clh)$ defined by
\begin{equation}\label{Cmap}
C_A(X) = A X A^* \quad \quad (X \in \clb(\clh)).
\end{equation}
It is easy to see that if $A_1 A_2 = A_2 A_1$ for some $A_1, A_2 \in \clb(\clh)$, then $C_{A_1} C_{A_2} = C_{A_2} C_{A_1}$.

\noindent We are now ready to state the Ball, Li, Timotin and Trent's Schur-Agler class version of commutant lifting theorem (Theorem 2.4 in \cite{BLTT}): Let $\clq \subseteq H^2_{\cle}(\D^n)$ and $\clq_* \subseteq H^2_{\cle_*}(\D^n)$ be closed subspaces and let $X \in \clb(\clq, \clq_*)$. Suppose that $M_{z_i}^* \clq \subseteq \clq$ and $M_{z_i}^* \clq_* \subseteq \clq_*$ and
\[
(P_{\clq_*} M_{z_i}|_{\clq_*}) X = X (P_{\clq} M_{z_i}|_{\clq}),
\]
for all $i = 1, \ldots, n$. Then there exists $\Phi \in \SA_n(\cle, \cle_*)$ such that $\|X\| = \|\Phi\|_{\infty}$ and
\[
X^*= M_{\Phi}^*|_{\clq_*},
\]
if and only if there exist positive operators $G_i \in \clb(\clq_*)$, $i = 1, \ldots, n$, such that
\[
I - X X^* = G_1 + \dots+G_{n},
\]
and
\[
\Big(\prod_{\stackrel{j=1}{j\ne i}}^{n}(I_{\clb(\clq_*)} - C_{P_{\clq_*} M_{z_i}|_{\clq_*}})\Big)(G_i) \ge 0,
\]
for all $i = 1,\ldots,n$.

Using the above Ball, Li, Timotin and Trent commutant lifting theorem as an inspiration, we introduce a class of operators in $\T^n(\clh)$. But before we do that, we introduce some notation and definitions.

\textsf{From now on we will assume that $n \geq 3$.} For $T \in \T^n(\clh)$ and $1 \leq i \leq n$, define
\[
\hat{T}_{i} = (T_1,\dots,T_{i-1}, T_{i+1},\dots,T_{n})
\in \T^{(n-1)}(\clh),
\]
the $(n-1)$-tuple obtained from $T$ by deleting $T_i$. Now we define the set
\[
\mathbb{S}_n(\clh) = \{ T \in \T^n(\clh) : \mathbb{S}_n^{-1}(T, T^*)
\geq 0 \; \mbox{and~} T_i \mbox{~is pure for all~} i = 1, \ldots, n\},
\]
where
\[
\mathbb{S}_n^{-1}(T, T^*) = \sum_{\bm{k} \in \{0,1\}^n}
(-1)^{|\bm{k}|} T^{\bm{k}} T^{*\bm{k}}.
\]
The elements of $\mathbb{S}_n(\clh)$ are called \textit{Szeg\"{o}
$n$-tuples}. Recall that a contraction $X \in \T^1(\clh)$ is said to be \textit{pure} if $\|X^{*m} h\| \raro 0$ as $m\raro\infty$ for all $h \in \clh$. Now we are ready to define the central object of this paper.

\begin{defn}\label{defn-PT}
An $n$-tuple $T \in \T^n(\clh)$ is said to be in $\p_n(\clh)$ if $\h_n \in
\mathbb{S}_{n-1}(\clh)$ and there exist positive operators $G_1, \ldots, G_{n-1}$ (depending on $T$) in $\clb(\clh)$ such that
\[
I-T_nT_n^* = G_1 + \dots+G_{n-1},
\]
and
\[
S_T(G_i) : = \Big(\prod_{\stackrel{j=1}{j\ne i}}^{n-1}(I_{\mathcal{B}(\Hil)}-
C_{T_j})\Big)(G_i) \geq 0,
\]
for all $i = 1,\ldots,n-1$.
\end{defn}

\begin{defn}\label{defn-Finite PT}
Let $T \in \p_n(\clh)$. We say that $T$ is a \textit{finite rank tuple} if
there exist positive operators $G_1, \ldots, G_{n-1}$ associated to $T$
as in the above definition such that
\[
\mbox{rank} \Big( \mathbb{S}_{n-1}^{-1}(\hat{T}_n, \hat{T}^*_n) \Big) < \infty, \quad \mbox{and} \quad \mbox{rank} \Big(S_T(G_i) \Big) < \infty,
\]
for all $i = 1, \ldots, n-1$.
\end{defn}

\textit{The main results of this paper says that}: If $T \in \p_n(\clh)$ is a finite rank tuple, then

(i) $T$ admits an explicit isometric dilation (see
Theorem \ref{dilation2}), and

(ii) there exists an algebraic variety $V$ in $\overline{\D}^n$ such that
\[
\|p(T)\|_{\clb(\clh)} \leq \sup_{\z \in V} |p(\z)|,
\]
for all $p \in \mathbb{C}[z_1, \ldots, z_n]$ (see  Theorem \ref{vN2}).

In fact, in Theorem \ref{key theorem} we first reprove the Ball, Li, Timotin and Trent commutant lifting theorem. Here, however, with a slightly more elaborated idea we prove an explicit version of the commutant lifting theorem. This method then yields an explicit construction of isometric dilations of finite rank tuples in $\T^n(\clh)$. This is the content of Section \ref{section-dilation}. Then in Section \ref{section-vn ineq}, as application of the explicit isometric dilations, we prove a refined version of von Neumann inequality, in terms of algebraic varieties in $\bar{\D}^n$ (or in $\D^n$), of finite rank tuples in $\p_n(\clh)$.

In Section \ref{section-examples}, we present some elementary examples. In Section \ref{section-preparatory}, we set up some notation and terminology and prove some basic results.

It is worth noting, in this context, that the class of commuting contractions in $\p_n(\clh)$ is larger than the one considered in \cite{BDHS} (see Remark \ref{remark-compare}).

\newsection{Examples}\label{section-examples}

Before we move on to the technical part, we present an elementary but non-trivial example of tuple in $\p_n(\clh)$.

Let $(T_1,T_2) \in \mathbb{S}_2(\clh)$ and let  $j, k \geq 1$. Suppose $T_3 = T_1^j T_2^k$. Then $T = (T_1, T_2, T_3) \in
\p_3(\clh)$. Indeed, if we set
\[
G_1 = I-T_1^j T_1^{*j},
\]
and
\[
G_2 = T_1^j (I - T_2^k T_2^{*k}) T_1^{*j},
\]
then clearly
\[
I - T_3 T_3^* = G_1 + G_2.
\]
On the other hand, $\mathbb{S}_2^{-1}((T_1,T_2), (T_1^*,T_2^*)) \geq 0$ implies that
\[
T_2(I-T_1T_1^*)T_2^*\le I-T_1T_1^*,
\]
from which it follows that
\[
\begin{split}
T_2 G_1T_2^* & = T_2 (\sum _{i=0}^{j-1}T_1^i(I-T_1T_1^*)T_1^{*i})
T_2^*
\\
&=\sum_{i=0}^{j-1}T_1^{i}T_2(I-T_1T_1^*)T_2^*T_1^{* i}
\\
&\le \sum_{i=0}^{j-1}T_1^{i}(I-T_1T_1^*)T_1^{* i}
\\
&= I-T_1^jT_1^{*j},
\end{split}
 \]
that is
\[
G_1 - T_2 G_1 T_2^* \geq 0.
\]
Similarly, $T_1(I-T_2T_2^*)T_1^*\le I- T_2T_2^*$ implies that
\[
G_2 - T_1 G_2 T_1^* \geq 0,
\]
and hence the claim follows.

In this context we remark, in view of $C_{T_i} C_{T_j} = C_{T_j} C_{T_i}$ for all $i=1, \ldots, n$, that (see Definition \ref{defn-PT})
\[
S_T(G_i) = \sum_{\bm{k} \in \Z^{n-2}} (-1)^{|\bm{k}|}
\h_{i,n}^{\bm{k}} G_i \h_{i,n}^{*\bm{k}},
\]
where $\h_{i,n}=(T_1,\dots,T_{i-1}, T_{i+1},\dots,T_{n-1})
\in \T^{(n-2)}(\clh)$ for $1\leqslant i\leqslant n-1$. In particular, if $n = 4$, then
\[
\prod_{\stackrel{j=1}{j\ne 3}}^{3}(I_{\mathcal{B}(\Hil)}-
C_{T_j})(G_3) = G_3 - T_1 G_3 T_1^* - T_2 G_3 T_2^* + T_1 T_2 G_3
T_1^* T_2^*.
\]
Moreover, $(T_1, T_2, T_3) \in \T^3(\clh)$ if there exist positive operators $G_1$ and $G_2$ in $\clb(\clh)$ such that
\[
I - T_3 T_3^* = G_1 + G_2,
\]
and
\[
G_2 - T_1 G_2 T^*_1 \geq 0 \quad \mbox{and} \quad G_1 - T_2 G_1 T^*_2 \geq 0.
\]

\begin{rem}\label{remark-compare}
Let $T = (T_1,\dots,T_n)\in \T^n(\Hil)$ and let $1 \leq p < q \leq n$. Recall from Subsection 2.3 in \cite{BDHS} that $T \in \T^n_{p,q}(\clh)$ if $\h_p \in \mathbb{S}_{n-1}(\clh)$ and $\h_q$ satisfies the Szeg\"{o} positivity. We claim that 
\[
\T^n_{p,q}(\clh) \subseteq \clp_n(\clh).
\]
Indeed, for $T \in \T^n(\clh)$, without any loss of generality, we assume that $\hat{T}_n\in \mathbb{S}_{n-1}(\clh) $ and $\hat{T}_1$ satisfies the Szeg\"{o} positivity. Then we set $G_1 = I-T_nT_n^*$ and $G_i = 0$ for all $i = 2, \ldots, n-1$, and consequently
\[
\prod_{j=2}^{n-1}(I_{\mathcal{B}(\Hil)}-
C_{T_j})(G_1)= \mathbb{S}_{n-1}^{-1}(\h_1, \h_1^*) \ge 0,
\]
as $\hat{T_1}\in\mathbb{S}_{n-1}(\clh)$. Therefore $T\in\p_{n}(\Hil)$, and hence, $\p_n(\clh)$ is considerably larger than $\T^n_{p,q}(\clh)$. 
\end{rem}

\newsection{Preparatory Results}\label{section-preparatory}

This section sets up some of the needed terminology and isolates some preparatory results. We start by considering the Hardy space over the unit polydisc.

The Hardy space $H^2(\D^n)$ over $\D^n$ is the Hilbert space of all
analytic functions $f = \sum_{\bm{k} \in \Z^n} a_{\bm{k}}
z^{\bm{k}}$ on $\D^n$ such that
\[
\|f\| = (\sum_{\bm{k} \in \Z^n} |a_{\bm{k}}|^2)^{\frac{1}{2}} < \infty.
\]
Moreover, for a Hilbert space $\cle$, the $\cle$-valued Hardy space
on $\D^n$ is denoted by $H^2_{\cle}(\D^n)$. We will also identify
$H^2_{\cle}(\D^n)$ with $H^2(\D^n) \otimes \cle$ via the unitary map
$\bm{z}^{\bm{k}} \eta \mapsto \bm{z}^{\bm{k}}\otimes \eta$ for all
$\bm{k} \in \Z^n$ and $\eta \in \cle$. It is well known that the
$\clb(\cle)$-valued function
\[
(\z, \w) \in \D^n \times \D^n \raro \mathbb{S}_n(\z, \w) I_{\cle}
\]
is the reproducing kernel for $H^2_{\cle}(\D^n)$, where
\[
\mathbb{S}_n(\z,\w)= \prod_{i=1}^n(1-z_i\bar{w_i})^{-1} \quad \quad
(\z, \w \in \D^n),
\]
is the Szeg\"{o} kernel on $\D^n$. Let $(M_{z_1}, \ldots, M_{z_n})$
denote the $n$-tuple of multiplication operators on
$H^2_{\cle}(\D^n)$. Here $(M_{z_i} f)(\w) := w_i f(\w)$
for all $f \in H^2_{\cle}(\D^n)$, $\w \in \D^n$ and $i = 1, \ldots,
n$. It follows from the definition that
\[
M_{z_i}^* M_{z_i} = I_{H^2_{\cle}(\D^n)}, \quad M_{z_i} M_{z_j} = M_{z_j} M_{z_i} \quad \mbox{and} \quad M_{z_p}^* M_{z_q} = M_{z_q} M_{z_p}^*,
\]
for all $i, j = 1, \ldots, n$ and $1 \leq p < q \leq n$.

Now we recall the fractional linear transformation representations of Schur-Agler class of functions. Let $\clh_i$, $i = 1, \ldots, n$, $\cle$ and $\cle_*$ be Hilbert spaces, and let
\[
E(\z) = \mathop{\oplus}^n_{i=1} z_iI_{\Hil_i} \in \clb(\mathop{\oplus}_{i=1}^n \Hil_i),
\]
the diagonal operator for all $\z \in \D^n$. Let
\begin{equation}\label{eq-6.1}
U=\begin{bmatrix}A &B\\ C& D\end{bmatrix} : \cle \oplus (\mathop{\oplus}_{i=1}^n \Hil_i) \raro
\cle_* \oplus (\mathop{\oplus}_{i=1}^n \Hil_i),
\end{equation}
be a unitary operator (known as \textit{colligation matrix}). Then the \textit{transfer function} $\tau_{U}$ corresponding to $U$
is defined by
\begin{equation}\label{eq-6.2}
\tau_{U}(\z) = A + B E(\z) (I_{\Hil}- D E(\z))^{-1}C \quad \quad (\z\in\D^n).
\end{equation}
Since $\|D\| \leq 1$, and so $\|D E(\z) \| <
1$ for all $\z \in \D^n$, it follows that $\tau_U$ is a
$\mathcal{B}(\cle, \cle_*)$-valued analytic function on $\D^n$.
Moreover, a standard and well-known computation (cf. \cite{AEM},
\cite{BLTT}) implies that
\begin{equation}\label{identity}
I - \tau_U(\z)^* \tau_{U}(\z) = C^* (I_{\Hil} - E(\z)^* D^*)^{-1}
(I_{\Hil}- E(\z)^* E(\z)) (I_{\Hil} - D E(\z))^{-1}C,
\end{equation}
for all $\z \in \D^n$. We conclude that $\tau_U$ is a contractive
multiplier, that is, $\tau_U \in H^\infty_{\clb(\cle,
\cle_*)}(\D^n)$ and $\|M_{\tau_U}\| \leq 1$ where
$M_{\tau_U} : H^2_{\cle}(\D^n) \raro H^2_{\cle_*}(\D^n)$ is the multiplication operator defined by 
\[
M_{\tau_U} f = \tau_U f \quad \quad (f \in H^2_{\cle}(\D^n)).
\]
The celebrated realization theorem of Agler states the following: $\Phi \in
\SA_n(\cle, \cle_*)$ (see Section \ref{sec-intro} for the definition of $\SA_n(\cle, \cle_*)$) if and only if there exist Hilbert spaces
$\{\clh_i\}_{i=1}^n$ and a unitary colligation matrix $U$, as in \eqref{eq-6.1}, such that $\Phi = \tau_U$.

The following basic result will be useful. Part of the proof relies on the following general result: Let $A$ be a contraction on a Hilbert space $\clk$, $\lambda \in \mathbb{T}$ and let $f \in \clk$. Then
\[
A f = \lambda f \Leftrightarrow A^* f = \bar{\lambda} f.
\]

\begin{lemma}\label{transfer}
Let $\clh_1, \ldots, \clh_n$ and $\cle$ be finite dimensional
Hilbert spaces, and let
\[
U=\begin{bmatrix}A &B\\ C& D\end{bmatrix} \in \clb(\cle \oplus (\mathop{\oplus}_{i=1}^n \Hil_i)),
\]
be a unitary matrix. Then the transfer function $\tau_{U}$ defined by
\[
\tau_{U}(\z) = A + B E(\z) (I_{\Hil}-D E(\z))^{-1}C \quad \quad (\z
\in \D^n),
\]
is unitary-valued a.e. on $\mathbb{T}^n$. Moreover, if $A \in \clb(\cle)$ is a completely non-unitary, then for each $\z \in \D^n$ the operator $\tau_{U}(\z)$ does not have eigenvalues on the unit circle.
\end{lemma}
\begin{proof}
Clearly, $\z \mapsto \det (I_{\Hil}-D E(\z))$ is a non-vanishing polynomial in $\D^n$ and hence 
\[
\z \mapsto (I_{\Hil}-D E(\z))^{-1},
\]
is a non-zero rational function in $H^\infty(\D^n)$. This means that $\z\mapsto \det(I_{\Hil}-DE(\z))$ does not vanish on a set of positive measure on $\mathbb{T}^n$ and hence $\z \mapsto (I-D E(\z))^{-1}$ exists a.e. on $\mathbb{T}^n$. The first part now follows from \eqref{identity}.

\NI For the second part, let $\z \in \D^n$, $\lambda \in \mathbb{T}$ and let $f$ be a non-zero element in $\cle$. Suppose
\[
\tau_U(\z) f = \lambda f.
\]
Since $\tau_U(\z)$ is a contraction, it follows that
\[
\tau_U(\z)^* f = \bar{\lambda} f,
\]
and so
\[
(I_{\cle} - \tau_U(\z)^* \tau_{U}(\z)) f = 0.
\]
It follows easily from \eqref{identity} that $C f = 0$. Then $A f = \tau_U(\z) f = \lambda f$ and so
\[
A^* f = \bar{\lambda} f.
\]
This implies that $A$ has a non-trivial unitary part, which leads to a contradiction.
\end{proof}

Let us now proceed to set the stage of the technical part in the proofs of our main results. Let $\cle$ be a Hilbert space. Define the evaluation map $ev_0 : H^2_{\cle}(\D^n) \raro \cle$ by
\[
ev_0f = f(\bm{0}) \quad \quad (f \in H^2_{\cle}(\D^n)).
\]
Then $ev_0^* : \cle \raro H^2_{\cle}(\D^n)$ sends $\eta \in \cle$ to the constant function $\eta \in H^2_{\cle}(\D^n)$ in the following sense:
\[
(ev^*_0 \eta)(\z) = \eta \quad \quad  (\z\in \D^n).
\]

Now let $T\in\T^n(\clh)$ and suppose $\h_n\in \sz_{n-1}(\clh)$. Let $\{F_1, \ldots, F_{n-1}\} \subseteq \clb(\Hil)$ and $\clk$ be a Hilbert space. For each $i=1,\ldots,n-1$, set
\[
\clf_i = \overline{\mbox{ran}} F_i,
\]
and define 
\[
\clf = \Big(\bigoplus_{i=1}^{n-2}\clf_i \Big) \oplus \Big( \clf_{n-1} \oplus \clk \Big),
\]
the $(n-1)$-fold direct sum of Hilbert spaces. Define 
\[
D_{\h_n} = \Big(\mathbb{S}_{n-1}^{-1}(\h_n, \h_n^*)\Big)^{\frac{1}{2}} \quad \mbox{and} \quad D_{\h_n} = \overline{\mbox{ran}}D_{\h_n}.
\]
Let
\begin{equation}\label{unitary}
U =\begin{bmatrix}A &B\\C&D\end{bmatrix}:
\cld_{\h_n} \oplus \clf \raro \cld_{\h_n} \oplus \clf,
\end{equation}
be a unitary operator such that
\begin{equation}
\label{generating identity}
U(D_{\h_n}h,F_1T_1^*h,\ldots, F_{n-2}T_{n-2}^*h, (F_{n-1}T_{n-1}^*h, 0_{\clk})) = (D_{\h_n}T_n^*h,F_1h,\ldots, F_{n-2}h, (F_{n-1}h,0_{\clk})),
\end{equation}
for all $h \in \clh$. Define $\imath$ and $Y$ in $\clb(\clh, \clf)$ by
\begin{equation}\label{eq-imath}
\imath h = (F_1h,\ldots, F_{n-2}h, (F_{n-1}h, 0_{\clk})),
\end{equation}
and
\begin{equation}\label{eq-Y}
Y h = (F_1T_1^*h,\ldots, F_{n-2}T_{n-2}^*h, (F_{n-1}T_{n-1}^*h,0_{\clk})),
\end{equation}
for all $h\in\Hil$. Then \eqref{generating identity} yields
\[
\begin{bmatrix} A & B\\C & D \end{bmatrix} \begin{bmatrix} D_{\h_n} h \\ Y h \end{bmatrix} = \begin{bmatrix} D_{\h_n} T^*_n h \\ \imath h \end{bmatrix}.
\]
Therefore
\begin{equation}\label{id1}
D_{\h_n}T_n^*=AD_{\h_n}+BY \quad \mbox{and} \quad \imath=CD_{\h_n}+DY.
\end{equation}
Suppose $\Phi = \tau_{U^*}$, that is (see \eqref{eq-6.2})
\begin{equation}\label{Phi}
\Phi(\z)=A^*+C^*(I_\clf-E(\z)D^*)^{-1}E(\z)B^*\quad \quad (\z \in \D^{n-1}).
\end{equation}
Define $\Psi: \clf\to H^2_{\cld_{\h_n}}(\D^{n-1})$ by
\begin{equation}\label{Psi}
[\Psi x](\z)=C^*(I_{\clf} - E(z)D^*)^{-1}x \quad \quad (x \in \clf, \z \in \D^{n-1}),
\end{equation}
and set
\[
\Psi=[\Psi_1\ldots\Psi_{n-1}],
\]
where, $\Psi_i: \clf_i\to H^2_{\cld_{\h_n}}(\D^{n-1})$ for all $i=1,\ldots,n-2${\tiny }, and $\Psi_{n-1}: \clf_{n-1}\oplus\clk\to H^2_{\cld_{\h_n}}(\D^{n-1})$. It is convenient to represent $D$ and $B$ as row operators:
\[
D = \begin{bmatrix} D_1 & \cdots & D_{n-2} & D_{n-1} \end{bmatrix} : \Big(\bigoplus_{i=1}^{n-2}\clf_i \Big) \oplus \Big( \clf_{n-1} \oplus \clk \Big) \raro \clf,
\]
where $D_i = D|_{\clf_i} : \clf_i \raro \clf$ for all $i=1, \ldots, n-2$ and $D_{n-1} = D|_{\clf_{n-1} \oplus \clk}$, and similarly
\[
B = \begin{bmatrix} B_1 & \cdots & B_{n-2} & B_{n-1} \end{bmatrix} : \Big(\bigoplus_{i=1}^{n-2}\clf_i \Big) \oplus \Big( \clf_{n-1} \oplus \clk \Big) \raro \cld_{\h_n}.
\]
The following two lemmas will be used in Section \ref{section-dilation} to prove the dilation theorem. We closely follow the ideas of \cite{BLTT}.

\begin{lemma}\label{pro-psi}
Let $\Phi$ and $\Psi$ be as above. Then $\Psi$ is a contraction. Moreover
\[
\Psi=ev_0^* C^*+\sum_{i=1}^{n-1}M_{z_j}\Psi_jD_j^*,
\]
and
\[
M_{\Phi} ev_0^* = ev_0^* A^*+\sum_{j=1}^{n-1}M_{z_j}\Psi_jB_j^*.
\]
\end{lemma}
\begin{proof}
It follows directly from Lemma 3.2 of \cite{BLTT} that $\Psi$ is a contraction. For the second part, let $x=(x_1,\ldots,x_{n-1})\in\clf$ (note that $x_{n-1} \in \clf_{n-1} \oplus \clk$) and $\z=(z_1,\ldots,z_{n-1})\in\D^{n-1}$. Then
\[
\Psi E(\z)x=\sum_{j=1}^{n-1}\Psi_jz_jx_j=\sum_{j=1}^{n-1}z_j\Psi_jx_j,
\]
and hence
\begin{align*}
[\Psi x](\z) & = C^*(I-E(z)D^*)^{-1}x
\\
& = C^*(I-E(\z)D^*)^{-1}(I-E(\z)D^*+E(\z)D^*)x
\\
& = C^*x + \Big(C^* (I-E(\z)D^*)^{-1} \Big)( E(\z)D^* x)
\\
& = (ev_0^* C^*x)(\z)+[\Psi E(\z)D^*x](\z)
\\
& = (ev_0^* C^*x)(\z) + \big(\sum_{j=1}^{n-1}z_j \Psi_j D_j^* x\Big)(\z)
\\
& = (ev_0^* C^*x)(\z) + \sum_{j=1}^{n-1}z_j \big(\Psi_j D_j^* x\Big)(\z)
\\
& = (ev_0^* C^*x)(\z) + \sum_{j=1}^{n-1}\big( M_{z_j} \Psi_j D_j^* x\Big)(\z).
\end{align*}
This implies that $\Psi=e_0^* C^*+\sum_{i=1}^{n-1}M_{z_j}\Psi_jD_j^*$. On the other hand, for each $\eta \in \cld_{\h_n}$, we have
\[
M_{\Phi} ev_0^* \eta = ev_0^* A^* \eta +\Psi E(\z)B^* \eta = ev_0^* A^* \eta + \Big[\sum_{j=1}^{n-1}M_{z_j}\Psi_jB_j^*\Big](\eta).
\]
This completes the proof.	
\end{proof}

For each $X \in \clb(\clh)$ and natural number $m$ we define (see page 656 in \cite{BLTT}) a positive map
\[
\varSigma_{X}^m : \clb(\clh)\to\clb(\clh),
\]
as
\[
\varSigma_{X}^m(A)=\sum_{k=0}^{m-1}X^k A X^{*k} \quad \quad (A \in\clb(\clh)).
\]
Let $T=(T_1,\ldots,T_n)$ be an $n$-tuple of commuting operators on $\clh$. It is easy to see that
\[
\varSigma_{T_i}^m \varSigma_{T_j}^m = \varSigma_{T_j}^m \varSigma_{T_i}^m,
\]
for all $i,j = 1, \ldots, n$. Define
\[
\varSigma_T^m(A) = \Big(\prod_{j=1}^{n}\varSigma_{T_j}^m \Big) (A) \quad \quad (A\in\clb(\clh)).
\]
Clearly, if $A\ge0$, then $\{\varSigma_T^m(A)\}_{m=1}^{\infty}$ is an increasing sequence of positive operators and we set
\[
\varSigma_T(A) = SOT-\lim_{m \raro \infty} \varSigma_T^m(A),
\]
provided the limit exists. Moreover, for each $i, j=1,\ldots, n$, and $m \geq 1$, we have
\[
\varSigma_{T_i}^m C_{T_j} = C_{T_j} \varSigma_{T_i}^m,
\]
as $T$ is a commuting tuple (see \eqref{Cmap} for the definition of the conjugate map $C_{T_j}$). Therefore
\begin{equation}\label{eq-10.1}
\varSigma_{T_i}^m (I - C_{T_i}) = (I - C_{T_i}) \varSigma_{T_i}^m = (I - C_{T^m_i}),
\end{equation}
for all $i=1,\ldots, n$, and hence
\begin{equation}\label{eq-sigma and C}
\varSigma_T^m \Big(\prod_{{j=1}}^{n}(I_{\mathcal{B}(\Hil)}-
C_{T_j}) \Big) = \Big( \prod_{{j=1}}^{n}(I_{\mathcal{B}(\Hil)}-
C_{T_j^m})\Big).
\end{equation}
Finally, it will be clear from the context in what follows that given a Hilbert space $\cle$, $M_z$ will denote the $n$-tuple of multiplication operators $(M_{z_1}, \ldots, M_{z_n})$ on $H^2_{\cle}(\D^n)$. Consequently, $\hat{M}_{z_j}$ will denote the $(n-1)$-tuple 
\[
(M_{z_1}, \dots, M_{z_{j-1}}, M_{z_{j+1}},\dots, M_{z_n}),
\]
obtained from $M_z$ by removing $M_{z_j}$.

We now prove a technical lemma.

\begin{lemma}\label{psipsi*}
Let $\clh_1$, \ldots, $\clh_n$ and $\cle$ be Hilbert spaces, $\Psi = [\Psi_1\cdots\Psi_n]: {\displaystyle}\oplus_{i=1}^n\clh_i\to H^2_{\cle}(\D^n)$ a bounded linear operator and let $\Phi \in H^\infty_{\clb(\cle)}(\D^n)$. If
\[
\|(ev_0 f, \Psi_1^*M_{z_1}^*f,\ldots,\Psi_n^*M_{z_n}^*f)\|_{\cle \oplus ({\oplus}_{i=1}^n \clh_i)} =\|(ev_0 M_{\Phi}^*f,\Psi_1^*f,\ldots,\Psi_n^*f)\|_{\cle \oplus (\oplus_{i=1}^n \clh_i)},
\]
for all $f\in H^2_{\cle}(\D^n)$, then
\[
\varSigma_{\hat{M}_{z_j}}(\Psi_j\Psi_j^*)<\infty
\]
for all $j=1,\ldots,n$.
\end{lemma}

\begin{proof}
The norm equality condition implies that
\[
ev_0^* ev_0 + \sum_{j=1}^{n}M_{z_j}\Psi_j\Psi_j^*M_{z_j}^* = M_{\Phi} e_0^* e_0 M_{\Phi}^* + \sum_{j=1}^{n}\Psi_j \Psi_j^*,
\]
and so
\begin{align*}
\sum_{j=1}^{n}\Psi_j \Psi_j^*-\sum_{j=1}^{n}M_{z_j}\Psi_j\Psi_j^*M_{z_j}^* & = ev_0^* ev_0 -M_{\Phi} ev_0^* ev_0 M_{\Phi}^*
\\
& =\Big(\prod_{{j=1}}^{n}(I-
C_{M_{z_j}})\Big)(I) - M_{\Phi} \Big(\big(\prod_{{j=1}}^{n}(I-C_{M_{z_j}})\big)(I) \Big) M_{\Phi}^*.
\end{align*}
This gives
\[
\sum_{j=1}^{n}(I-C_{M_{z_j}})(\Psi_j\Psi_j^*) = \Big( \prod_{{j=1}}^{n}(I-C_{M_{z_j}}) \Big)(I-M_{\Phi}M_{\Phi}^*),
\]	
as $M_{\Phi} M_{z_i} = M_{z_i} M_{\Phi}$ for all $i=1, \ldots, n$. Applying $\varSigma_{M_{\z}}^m$ to both sides of the above equality and then using \eqref{eq-10.1} and \eqref{eq-sigma and C} we obtain
\[
\sum_{j=1}^{n}\varSigma_{\hat{M}_{z_j}}^m \Big((I-
C_{M_{z_j}^m})(\Psi_j\Psi_j^*)\Big) = \prod_{{j=1}}^{n}(I-
C_{M_{z_j}^m})(I-M_{\Phi} M_{\Phi}^*).
\]
If we set
\[
\clk_0= \mathop{\cup}_{k=0}^{\infty} \mathop{\cap}_{j=1}^{n}\text{Ker}\,M_{z_j}^{*k},
\]
then for any $f\in\clk_0$ we have
\[
\sum_{j=1}^{n}\varSigma_{\hat{M}_{z_j}}^m(\Psi_j\Psi_j^*)f=(I-M_{\Phi}M_{\Phi}^*)f
\]
for sufficiently large $m$. Since $\clk_0$ is dense in $H^2_{\cle}(\D^n)$, it follows that $\varSigma_{\hat{M}_{z_j}}(\Psi_j\Psi_j^*)<\infty$ for all $j=1,\ldots,n$.
\end{proof}

\newsection{Dilations of tuples in $\p_n(\clh)$}\label{section-dilation}

The purpose of this section is to construct explicit liftings and isometric dilations of finite rank tuples in $\p_n(\clh)$. Note that if $V \in \T^n (\clk)$ is an isometric dilation of $T \in \T^n(\clh)$ with $\Pi : \clh \raro \clk$ as the corresponding intertwining isometry (see the definition of dilations in Section \ref{sec-intro}) and
\[
\clq = \textit{ran~} \Pi,
\]
then $\clq$ is a joint invariant
subspace for $(V_1^*, \ldots, V_n^*)$. Moreover, $(T_1, \ldots, T_n)$ on
$\clh$ and $(P_{\clq} V_1|_{\clq}, \ldots, P_{\clq} V_n|_{\clq})$ on
$\clq$ are unitarily equivalent (via the unitary $\Pi : \clh \raro
\clq$), and
\[
(P_{\clq} V|_{\clq})^{* \bm{k}} = V^{* \bm{k}}|_{\clq},
\]
for all $\bm{k} \in \Z^n$.

The following dilation result is well known (see \cite{CV1}, \cite{MV} and also see \cite{BDHS}):

\begin{thm}\label{th-dc dil}
Let $T \in \mathbb{S}_n(\clh)$. Suppose $\Dt = \mathbb{S}_n^{-1}(T, T^*)^{1/2}$ and $\dt
=\overline{\text{ran}}~\Dt$. Then $\Pi: \Hil\to H^2_{\cld_T}(\D^n)$,
defined by
\[
(\Pi h)(\z) = \sum_{\bm{k} \in \Z^n} {\bm{z}}^{\bm{k}} D_T
T^{*\bm{k}} h \quad \quad \quad  (h \in \clh, \z \in \D^n),
\]
is an isometry and satisfies $\Pi T_i^* = M_{z_i}^*\Pi$ for all $i = 1,
\ldots, n$. Moreover, the dilation is minimal, that is
\[
H^2_{\cld_T}(\D^n) = \overline{span} \{\z^{\bm{k}} (\Pi \clh): \bm{k} \in \Z^n\}.
\]
\end{thm}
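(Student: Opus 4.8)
The plan is to verify the two assertions separately: first that $\Pi$ is a well-defined isometry, and then that it intertwines $T_i^*$ with $M_{z_i}^*$. Since membership of $\Pi h$ in $H^2_{\dt}(\D^n)$ amounts to $\sum_{\bm{k} \in \Z^n} \|\Dt T^{*\bm{k}} h\|^2 < \infty$, and this sum is precisely $\|\Pi h\|^2$, both well-definedness and the isometry property follow at once from the single identity $\sum_{\bm{k} \in \Z^n} \|\Dt T^{*\bm{k}} h\|^2 = \|h\|^2$ for all $h \in \clh$.

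The heart of the matter is therefore to establish this identity, which I would do by a telescoping argument. Writing $\|\Dt T^{*\bm{k}} h\|^2 = \langle T^{\bm{k}} \Dt^2 T^{*\bm{k}} h, h\rangle$ and recalling $\Dt^2 = \mathbb{S}_n^{-1}(T,T^*)$, the claim reduces to showing that the partial sums $\sum_{\bm{k} \in \{0,\ldots,N\}^n} T^{\bm{k}}\,\mathbb{S}_n^{-1}(T,T^*)\,T^{*\bm{k}}$ converge to the identity in the strong operator topology as $N \raro \infty$. The clean way to see this is to introduce, for each $i$, the linear maps $\Phi_i(X) = X - T_i X T_i^*$ and $\Lambda_i(X) = T_i X T_i^*$ on $\clb(\clh)$; since the $T_i$ commute, all of these maps commute with one another, and one checks directly that $\mathbb{S}_n^{-1}(T,T^*) = (\Phi_1 \circ \cdots \circ \Phi_n)(I)$. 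The partial-sum operation likewise factors coordinatewise as a composition of the maps $\Theta_i^N(X) = \sum_{k=0}^N T_i^k X T_i^{*k}$, and these commute with the $\Phi_j$, so applying it to $\mathbb{S}_n^{-1}(T,T^*)$ collapses, via the one-variable telescoping $\Theta_i^N(\Phi_i(X)) = X - T_i^{N+1} X T_i^{*(N+1)}$, to $\prod_{i=1}^n (\mathrm{id} - \Lambda_i^{N+1})(I)$. Expanding this product yields $I$ plus a signed sum of terms $T^{\bm{m}} T^{*\bm{m}}$ in which at least one coordinate of $\bm{m}$ equals $N+1$; each such term tends to $0$ strongly because, by commutativity and purity, $\|T^{*\bm{m}} h\| \leq \|T_i^{*(N+1)} h\| \raro 0$. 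This is the step I expect to require the most care, precisely because $\mathbb{S}_n^{-1}(T,T^*)$ does \emph{not} factor as $\prod_i (I - T_i T_i^*)$ (the operators $T_i$ and $T_j^*$ need not commute), so one must argue at the level of the commuting conjugation maps rather than the operators themselves.

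For the intertwining relation, I would simply compare Taylor coefficients. On $H^2_{\dt}(\D^n)$ the operator $M_{z_i}^*$ acts as the backward shift in the $i$-th variable, so the $\z^{\bm{k}}$-coefficient of $M_{z_i}^* \Pi h$ is the $\z^{\bm{k}+e_i}$-coefficient of $\Pi h$, namely $\Dt T^{*(\bm{k}+e_i)} h$. On the other side, the $\z^{\bm{k}}$-coefficient of $\Pi(T_i^* h)$ is $\Dt T^{*\bm{k}} T_i^* h = \Dt T^{*(\bm{k}+e_i)} h$. Since these agree for every $\bm{k} \in \Z^n$, the identity $\Pi T_i^* = M_{z_i}^* \Pi$ follows. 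This last part is entirely routine once $\Pi$ is known to be bounded.
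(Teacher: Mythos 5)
Your proof is correct. There is, however, nothing in the paper to compare it against: the paper states this theorem as a known result and defers its proof entirely to the cited references of Curto--Vasilescu and M\"{u}ller--Vasilescu, so your argument is in effect a reconstruction of the standard proof from that literature. The reconstruction is sound. The key identity $\mathbb{S}_n^{-1}(T,T^*) = (\Phi_1 \circ \cdots \circ \Phi_n)(I)$ with the commuting completely positive maps $\Lambda_i(X) = T_i X T_i^*$ and $\Phi_i = \mathrm{id} - \Lambda_i$ is exactly right, and your insistence on working at the level of these conjugation maps --- because $\mathbb{S}_n^{-1}(T,T^*)$ does \emph{not} factor as $\prod_i (I - T_i T_i^*)$ when $T_i$ and $T_j^*$ fail to commute --- is the correct point of care. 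The telescoping $\Theta_i^N \circ \Phi_i = \mathrm{id} - \Lambda_i^{N+1}$ then collapses the box partial sums to $\prod_{i=1}^n(\mathrm{id} - \Lambda_i^{N+1})(I)$, and each of the $2^n - 1$ nontrivial terms in the expansion satisfies $\|T^{\bm{m}} T^{*\bm{m}} h\| \le \|T_i^{*(N+1)} h\| \raro 0$ by purity and contractivity, giving strong convergence of the partial sums to $I$. Since the summands $\|\Dt T^{*\bm{k}} h\|^2$ are nonnegative, convergence along the cofinal family of boxes $\{0,\ldots,N\}^n$ yields convergence of the unordered sum to $\|h\|^2$, which settles both well-definedness and the isometry property at once, as you say. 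The coefficient comparison for $\Pi T_i^* = M_{z_i}^* \Pi$ is routine and correct, using only that $M_{z_i}^*$ is the backward shift in the $i$-th variable and that the $T_j^*$ commute.
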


We call the map $\Pi$ as the \textit{canonical dilation} of $T$. The following standard lemma will be useful.

\begin{lemma}\label{l2}
Let $T \in \mathbb{S}_n(\clh)$ and let $\Pi$ be the canonical dilation of $T$.
 Then for all $\bm{k} \in \Z^n$ and $\eta \in \cld_{T}$, we have
\[
\Pi^*(\z^{\bm{k}}\otimes \eta)=T^{\bm{k}}D_{T}\eta.
\]
\end{lemma}
\begin{proof}
For $h\in \Hil$, it is easy to see that
\[
ev_0 \Pi h = ev_0 \big(\sum_{\bm{l} \in \Z^{n-1}} {\z}^{\bm{l}} \otimes D_{T} T^{*\bm{l}} h\big) = D_{T}h,
\]
thus
\begin{equation}\label{Q*pi}
ev_0 \Pi=D_{T}.
\end{equation}
Hence, for $\eta \in \cld_{T}$ and $\bm{k}\in\Z^{n-1}$ we obtain
\[
\Pi^*(\z^{\bm{k}}\otimes \eta)=\Pi^*M_{\z}^{\bm{k}} ev_0^* \eta =T^{\bm{k}}\Pi^* ev_0^* \eta = T^{\bm{k}}D_{T} \eta,
\]
and the proof follows.
\end{proof}

We are now ready to prove the first commutant lifting theorem of this paper. The explicit representation of the lifting $\Phi\in \SA_{n-1}(\cld_{\h_n},\cld_{\h_n})$ below will be useful in our isometric dilations and von Neumann inequality. For a $T\in \T^n(\clh)$ and $1\le j\le n-1$, recall that, $\hat{T}_{j,n}$ denotes the $(n-2)$-tuple
\[
(T_1,\dots,T_{j-1},T_{j+1},\dots, T_{n-1}),
\]
obtained from $T$ by removing $T_j$ and $T_n$

\begin{thm}\label{key theorem}
Let $\clh$ and $\clk$ be Hilbert spaces, $T\in \T^n(\clh)$, and let $\{F_1,\ldots,F_{n-1}\} \subseteq \clb(\Hil)$. Suppose $\h_n \in \mathbb{S}_{n-1}(\clh)$, $\clf_i=\overline{\mbox{ran}} F_i$ for all $i=1,\dots, n-1$, $\Pi:\Hil\to H^2_{\cld_{\h_n}}(\D^{n-1})$ is the canonical dilation of $\h_n$, and
\[
\clf = \Big(\bigoplus_{i=1}^{n-2} \clf_i \Big) \oplus \Big(\clf_{n-1} \oplus \clk \Big).
\]
If $\varSigma_{\hat{T}_{i,n}}(F_i^*F_i)$ exists for all $i=1,\dots, n-1$, and
 $U: \cld_{\h_n}\oplus\clf \raro \cld_{\h_n}\oplus \clf$ is a unitary satisfying
\[
U(D_{\h_n}h,F_1T_1^*h,\ldots, F_{n-2}T_{n-2}^*h, (F_{n-1}T_{n-1}^*h,0_{\clk})) = (D_{\h_n}T_n^*h,F_1h,\ldots, F_{n-2}h, (F_{n-1}h,0_{\clk})),
\]
for all $h\in\Hil$, then
\[
\Pi T_n^* = M_{\Phi}^*\Pi,
\]
where $\Phi\in \SA_{n-1}(\cld_{\h_n},\cld_{\h_n})$ is the transfer function of $U^*$.
\end{thm}

\begin{proof}
Let $U=\begin{bmatrix}A &B\\C&D\end{bmatrix}$ be the block matrix representation of $U$ with respect to the decomposition $D_{\h_n}\oplus \clf$. Suppose $\Phi = \tau_{U^*}$ (see \eqref{Phi}), $\Psi$ be as in (\ref{Psi}) and let $f\in H^2_{\cld_{\h_n}}(\D^{n-1})$. Then, in view of \eqref{generating identity}, Lemma \ref{pro-psi} implies that
\[
U(ev_0 f, \Psi_1^*M_{z_1}^*f,\ldots, \Psi_{n-1}^* M_{z_{n-1}}^*f) = (ev_0 M_{\Phi}^*f,\Psi_1^*f,\ldots, \Psi_{n-1}^*f).
\]
In particular
\[
\|(ev_0 f, \Psi_1^*M_{z_1}^*f,\ldots, \Psi_{n-1}^* M_{z_{n-1}}^*f)\|=\|(ev_0 M_{\Phi}^*f,\Psi_1^*f,\ldots, \Psi_{n-1}^*f)\|,
\]
and therefore
\[
\varSigma_{\hat{M}_{z_j}}(\Psi_j\Psi_j^*)<\infty,
\]
for all $j=1,\ldots,n-1$, by Lemma \ref{psipsi*}. Hence
\[
\varSigma_{\hat{T}_{j, n}}(\Pi^*\Psi_j\Psi_j^*\Pi)=\Pi^* \Big(\varSigma_{\hat{M}_{z_j}}(\Psi_j\Psi_j^*)\Big)\Pi < \infty,
\]
for all $j=1,\ldots,n-1$. Now define $\imath$ and $Y$ in $\clb(\clh, \clf)$ as (see \eqref{eq-imath} and \eqref{eq-Y})
\[
\imath h = (F_1h,\ldots, F_{n-2}h, (F_{n-1}h,0_{\clk})),
\]
and
\[
Y h = (F_1T_1^*h,\ldots, F_{n-2}T_{n-2}^*h, (F_{n-1}T_{n-1}^*h, 0_{\clk})),
\]
for all $h \in \clh$. Also define $\Gamma: \clf\to \clh $ by
\[
\Gamma=\imath^*-\Pi^*\Psi.
\]
We set $\Gamma=[\Gamma_1 \cdots \Gamma_{n-1}]$, where
\[
\Gamma_i = \Gamma |_{\clf_i}=F_i^*|_{\clf_i}-\Pi^*\Psi_i : \clf_i \raro \clh,\]
for all $i = 1,\dots, n-2$, and
 \[
\Gamma_{n-1}=\Gamma |_{\clf_{n-1}\oplus \clk}=
\begin{bmatrix}F_{n-1}^*|_{\clf_{n-1}} & 0\end{bmatrix} -\Pi^*\Psi_{n-1}: \clf_{n-1} \oplus \clk \raro \clh.
\]
Now for each $j=1,\dots,n-2$, we have
\[
\begin{split}
\Gamma_j\Gamma_j^*& = (F_j^*|_{\clf_j}-\Pi^*\Psi_j)(F_j-\Psi_j^*\Pi)
\\
& \leq (F_j^*|_{\clf_j}-\Pi^*\Psi_j)(F_j-\Psi_j^*\Pi)+(F_j^*|_{\clf_j}+\Pi^*\Psi_j)(F_j+\Psi_j^*\Pi)
\\
& = 2 (F_j^*F_j+\Pi^*\Psi_j\Psi_j^*\Pi),
\end{split}
\]
and hence
\[
\varSigma_{\hat{T}_{j,n}}(\Gamma_j\Gamma_j^*)<\infty.
\]
By a similar computation, we also have that $\varSigma_{\hat{T}_{n-1, n}}(\Gamma_{j}\Gamma_j^*)<\infty$. On the other hand, since (see \eqref{id1})
\[
\imath = C D_{\h_n} + DY,
\]
we have
\begin{align*}
\Gamma=& \imath^*-\Pi^*\Psi
\\
= & D_{\h_n}C^*+Y^*D^*-\Pi^*QC^*-\Pi^*\sum_{j=1}^{n-1}M_{z_j}\Psi_jD_j^*
\\
=&D_{\h_n}C^*+\sum_{j=1}^{n-1}T_jF_j^*D_j^*-D_{\h_n}C^*-\sum_{j=1}^{n-1}T_{j}\Pi^*\Psi_jD_j^*
\\
=&\sum_{j=1}^{n-1}T_j\Gamma_jD_j^*\\
=&\begin{bmatrix} T_1\Gamma_1 & \cdots & T_{n-1}\Gamma_{n-1}\end{bmatrix} \begin{bmatrix}
D_1^*\\
\vdots\\
D_{n-1}^*
\end{bmatrix},
\end{align*}
and hence
\[
\Gamma\Gamma^*= \begin{bmatrix} T_1\Gamma_1 & \cdots & T_{n-1}\Gamma_{n-1} \end{bmatrix} D^*D \begin{bmatrix}
\Gamma_1^*T_1^*\\
\vdots\\
\Gamma_{n-1}^*T_{n-1}^*
\end{bmatrix}\leq\sum_{j=1}^{n-1}T_j\Gamma_j\Gamma_j^*T_j^*,
\]
as $D$ is a contraction. This implies
\[
\sum_{j=1}^{n-1}(I_{\mathcal{B}(\Hil)}-
C_{T_j})(\Gamma_j\Gamma_j^*)=\sum_{j=1}^{n-1}\Gamma_j\Gamma_j^*-\sum_{j=1}^{n-1}T_j\Gamma_j\Gamma_j^*T_j^*\leq 0.
\]
Next, for each natural number $m$ and $j = 1, \ldots, n-1$, we have
\[
\begin{split}
\varSigma_{\hat{T}_{j,n}}^m (\Gamma_j\Gamma_j^*)-T_j^{m}\varSigma_{\hat{T}_{j,n}}(\Gamma_j\Gamma_j^*)T_j^{*m} & \leq \varSigma_{\hat{T}_{j,n}}^m(\Gamma_j\Gamma_j^*)-T_j^{m}\varSigma_{\hat{T}_{j,n}}^m(\Gamma_j\Gamma_j^*)
T_j^{*m}
\\
& = \varSigma_{\hat{T}_{j,n}}^m(I_{\mathcal{B}(\Hil)}-
C_{T^m_j})(\Gamma_j\Gamma_j^*),
\end{split}
\]
and since (see \eqref{eq-10.1})
\[\varSigma_{\hat{T}_{j,n}}^m(I_{\mathcal{B}(\Hil)}- C_{T^m_j})=\varSigma_{\hat{T}_{j,n}}^m\varSigma_{T_j}^m(I_{\mathcal{B}(\Hil)}-
C_{T_j})=\varSigma^m_{\hat{T}_n}(I_{\mathcal{B}(\Hil)}- C_{T_j}),
\]
it follows that
\[
\sum_{j=1}^{n-1}\big[\varSigma_{\hat{T}_{j,n}}^m (\Gamma_j\Gamma_j^*)-T_j^{m}\varSigma_{\hat{T}_{j,n}}(\Gamma_j\Gamma_j^*)T_j^{*m}\big]
 \leq \varSigma_{\hat{T}_n}^m\big(\sum_{j=1}^{n-1}(I_{\mathcal{B}(\Hil)}-
C_{T_j})(\Gamma_j\Gamma_j^*)\big) \leq 0.
\]
Since $T_j$ is pure, passing to the limit as $m \to\infty$, we get
\[
\sum_{j=1}^{n-1}\varSigma_{\hat{T}_{j,n}}(\Gamma_j\Gamma_j^*)\leq 0.
\]
On the other hand, since $\varSigma_{\hat{T}_{j,n}}(\Gamma_j\Gamma_j^*)\geqslant 0$, by definition, we have that
\[
\varSigma_{\hat{T}_{j,n}}(\Gamma_j\Gamma_j^*)=0,
\]
for all $j=1,\dots,n-1$. Hence
\[
 F_j^*|_{\clf_j}-\Pi^*\Psi_j=\Gamma_j= 0\quad \text{and }
 \quad \tilde{F}_{n-1}^*-\Pi^*\Psi_{n-1}=\Gamma_{n-1}=0,
\]
$j=1,\ldots,n-2$. Finally, for $\bm{p}\in \Z^{n-1} $ and $\eta \in\cld_{\h_n}$, we have
\[
\Pi^*M_{\Phi}(\z^{\bm{p}}\otimes \eta)
= \Pi^*M_{\z^{\bm{p}}}M_{\Phi}(1\otimes \eta)= \h_n^{\bm{p}}\Pi^*M_{\Phi}(1\otimes \eta)= \h_n^{\bm{p}}\Pi^*M_{\Phi}Q \eta.
\]
But
\[
\h_n^{\bm{p}}\Pi^*M_{\Phi}Q \eta = \h_n^{\bm{p}}\Pi^*\Big[QA^*+\sum_{j=1}^{n-1}M_{z_j}\Psi_jB_j^*\Big]\eta = \h_n^{\bm{p}}\Big[D_{\h_n}A^*+\sum_{j=1}^{n-1}T_{j}\Pi^*\Psi_jB_j^*\Big]\eta,
\]
by Lemma \ref{pro-psi} and \eqref{Q*pi}, and therefore by \eqref{id1} and Lemma \ref{l2}
\[
\Pi^*M_{\Phi}(\z^{\bm{p}}\otimes \eta)
= \h_n^{\bm{p}}T_nD_{\h_n} \eta = T_n\h_n^{\bm{p}}D_{\h_n} \eta
=T_n\Pi^*(\z^{\bm{p}}\otimes \eta).
\]
But since $\{\z^{\bm{p}}\otimes \eta :\bm{p}\in \Z^{n-1}, m\in\cld_{\h_n}\}$ is a total set in $H^2_{\cld_{\h_n}}(\D^{n-1})$, we conclude
\[
\Pi^*M_{\Phi}=T_n\Pi^*,
\]
which completes the proof of the theorem.
\end{proof}

We are now ready to prove the dilation theorem for commuting tuples in $\p_n(\clh)$ (recall Definitions \ref{defn-PT} and \ref{defn-Finite PT}). Recall from Theorem \ref{th-dc dil} that given a $T \in \p_n(\clh)$, the canonical isometric dilation $\Pi:\Hil\to H^2_{\cld_{\h_n}}(\D^{n-1})$ of the $(n-1)$-tuple $\h_n$ is an isometry and 
\[
\Pi T_i^*=M_{z_i}^*\Pi,
\]
for all $i=1,\dots,n-1$.

\begin{thm}\label{dilation2}
If $T \in \p_n(\clh)$, then
there exists a contractive multiplier $\Phi\in \cls_{n-1}(\cld_{\h_n}, \cld_{\h_n})$
such that
\[
\Pi T_i^* = \left \{\begin{array}{ll}
M_{z_i}^* \Pi \;\; \quad \quad \text{if~} i = 1, \ldots, n-1,\\
M_{\Phi}^* \Pi \quad \quad \; \;\;\text{if~} i = n.
\end{array}
\right.
\]
If, in addition, $T$ is finite rank, then $\Phi$ is an inner function.
In particular, a finite rank $T$ in $\p_n(\clh)$ dilates
to a commuting isometries $(M_{z_1},\dots, M_{z_{n-1}},M_{\Phi})$ on $H^2_{\cld_{\h_n}}(\D^{n-1})$.
\end{thm}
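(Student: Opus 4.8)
The plan is to obtain the two families of intertwining relations separately, reducing the whole theorem to the construction of the data $(F_1,\dots,F_{n-1},U)$ demanded by Theorem \ref{key theorem}, and then to read off innerness and the dilation from the finite-rank hypothesis.

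First I would dispose of the coordinates $i=1,\dots,n-1$. Since $T\in\p_n(\clh)$, by definition $\h_n\in\sz_{n-1}(\clh)$ is a pure tuple, so Theorem \ref{th-dc dil}, applied to the $(n-1)$-tuple $\h_n$, furnishes the canonical isometry $\Pi:\clh\to H^2_{\cld_{\h_n}}(\D^{n-1})$ together with $\Pi T_i^*=M_{z_i}^*\Pi$ for $i=1,\dots,n-1$. This already produces the first $n-1$ relations, and this $\Pi$ is the isometry in the statement.

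Next I would produce the last relation $\Pi T_n^*=M_\Phi^*\Pi$ by checking the hypotheses of Theorem \ref{key theorem}. Put $F_i=\big(\prod_{j\ne i}(I-C_{T_j})(G_i)\big)^{1/2}$, which is a well-defined positive operator precisely because of the positivity conditions defining $\p_n(\clh)$. The crux is to build a unitary $U$ satisfying the generating identity \eqref{generating identity 2}; equivalently, the assignment $(D_{\h_n}h,F_1T_1^*h,\dots,F_{n-1}T_{n-1}^*h)\mapsto(D_{\h_n}T_n^*h,F_1h,\dots,F_{n-1}h)$ must extend to an isometry between the closed spans, and matching the two norms reduces to the single operator identity
\[
\sum_{i=1}^{n-1}\big(I-C_{T_i}\big)(F_i^2)=D_{\h_n}^2-T_nD_{\h_n}^2T_n^*.
\]
I would prove this by telescoping: since the maps $I-C_{T_j}$ commute, $(I-C_{T_i})(F_i^2)=\prod_{j=1}^{n-1}(I-C_{T_j})(G_i)$, so summing over $i$ and using $\sum_iG_i=I-T_nT_n^*$ gives $\prod_{j=1}^{n-1}(I-C_{T_j})(I-T_nT_n^*)$. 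Now $\prod_{j=1}^{n-1}(I-C_{T_j})(I)=\sz_{n-1}^{-1}(\h_n,\h_n^*)=D_{\h_n}^2$, while, because $T_n$ commutes with every $T_j$, the term coming from $T_nT_n^*$ factors out as $T_nD_{\h_n}^2T_n^*$, yielding the displayed identity. With the norms matched the map is a well-defined isometry, and I would extend it to a unitary $U$ on $\cld_{\h_n}\oplus\clf$ by absorbing the defect into the slack space $\clk$. Theorem \ref{key theorem} then gives $\Pi T_n^*=M_\Phi^*\Pi$ with $\Phi\in\cls_{n-1}(\cld_{\h_n},\cld_{\h_n})$ the transfer function of $U^*$, completing the existence half.

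For the finite-rank refinement I would observe that $\dim\cld_{\h_n}<\infty$ and $\dim\clf_i<\infty$, so the isometry above maps between subspaces of the finite-dimensional space $\cld_{\h_n}\oplus(\oplus_i\clf_i)$ of equal dimension; hence their orthogonal complements agree in dimension and $\clk$ may be taken finite dimensional (indeed $\clk=\{0\}$), making $U$ a unitary on a finite-dimensional space. Lemma \ref{transfer}, applied to $U^*$, then shows that $\Phi=\tau_{U^*}$ is unitary valued almost everywhere on $\mathbb{T}^{n-1}$, so $\|M_\Phi f\|=\|f\|$ and $\Phi$ is inner. Finally, $(M_{z_1},\dots,M_{z_{n-1}},M_\Phi)$ is a commuting tuple of multiplication operators, each an isometry (the $M_{z_i}$ are shifts and $M_\Phi$ is isometric by innerness), and composing the relations $\Pi T_i^*=M_{z_i}^*\Pi$ and $\Pi T_n^*=M_\Phi^*\Pi$ gives $\Pi T^{*\bm{k}}=V^{*\bm{k}}\Pi$ for all $\bm{k}\in\Z^n$ with $V=(M_{z_1},\dots,M_{z_{n-1}},M_\Phi)$; thus $\Pi$ exhibits $V$ as an isometric dilation of $T$ in the sense of Subsection \ref{ss-dilation}.

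I expect the main obstacle to be the displayed operator identity: it is the one spot where the defining structure of $\p_n(\clh)$ (the splitting $I-T_nT_n^*=\sum_iG_i$ together with the telescoping positivity of the $G_i$) is genuinely used rather than invoked as a prior result. A secondary point needing care is guaranteeing that $\clk$ can be chosen finite dimensional in the finite-rank case, which is exactly what makes Lemma \ref{transfer} applicable.
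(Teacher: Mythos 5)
Your proposal is correct and follows essentially the same route as the paper: the canonical isometry from Theorem \ref{th-dc dil} handles $i=1,\dots,n-1$, the operators $F_i^2=\prod_{j\ne i}(I-C_{T_j})(G_i)$ together with the identity $\sum_i(I-C_{T_i})(F_i^2)=D_{\h_n}^2-T_nD_{\h_n}^2T_n^*$ (which the paper derives by the same commutativity/telescoping computation) yield the isometry that is extended to a unitary $U$ with slack space $\clk$, Theorem \ref{key theorem} gives $\Pi T_n^*=M_\Phi^*\Pi$, and in the finite-rank case one takes $\clk=\{0\}$ and invokes Lemma \ref{transfer} for innerness. The only cosmetic difference is that you verify the key identity by summing first and factoring afterwards, whereas the paper expands $D_{\h_n}^2-T_nD_{\h_n}^2T_n^*$ first; the content is identical.
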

\begin{proof}
Let $\{G_1, \ldots, G_{n-1}\}$ be the positive operators on $\clh$ corresponding to $T \in \p_n(\clh)$. Then
\[
I-T_nT_n^*= G_1+\dots+G_{n-1},
\]
so that
\[
\begin{split}
D_{\h_n}^2-T_nD_{\h_n}^2T_n^* & = \sum_{\bm{k} \in \Z^{n-1}}
(-1)^{|\bm{k}|} \h_{n}^{\bm{k}} (I - T_n T_n^*) \h_{n}^{*\bm{k}}
\\
& = \prod_{j=1}^{n-1}(I_{\mathcal{B}(\Hil)}- C_{T_j})(I-T_nT_n^*)
\\
& = \sum_{i=1}^{n-1}\prod_{j=1}^{n-1}(I_{\mathcal{B}(\Hil)}-
C_{T_j})(G_i)
\\
&=  \sum_{i=1}^{n-1}
\prod_{\stackrel{j=1}{j\ne i}}^{n-1} (I_{\mathcal{B}(\Hil)}-
C_{T_j}) (I_{\mathcal{B}(\Hil)}- C_{T_i})(G_i)
\\
&=\sum_{i=1}^{n-1}\Big(\prod_{\stackrel{j=1}{j\ne i}}^{n-1}
(I_{\mathcal{B}(\Hil)}- C_{T_j})
(G_i)-T_i\Big(\prod_{\stackrel{j=1}{j\ne
i}}^{n-1}(I_{\mathcal{B}(\Hil)}- C_{T_j})(G_i)\Big)T_i^*\Big),
\end{split}
\]
in view of $C_{T_p} C_{T_q} = C_{T_q} C_{T_p}$ for all $p,q = 1, \ldots, n-1$. If we define (see Definition \ref{defn-PT})
\[
S_T(G_i) = \prod_{\stackrel{j=1}{j\ne i}}^{n-1}(I_{\mathcal{B}(\Hil)}-
C_{T_j})(G_i),
\]
then $S_T(G_i) \geq 0$ for all $i=1, \ldots, n-1$, by assumption, and
\[
\begin{split}
D_{\h_n}^2 - T_nD_{\h_n}^2T_n^* =\sum_{i=1}^{n-1}\Big(S_T(G_i) - T_i S_T(G_i) T_i^*\Big).
\end{split}
\]
If we let $F_i^2 = S_T(G_i)$ for all $i=1, \ldots, n-1$, for simplicity, then
\[
D_{\h_n}^2 + \sum_{i=1}^{n-1} T_iF_i^2T_i^* = T_nD_{\h_n}^2T_n^* +
\sum_{i=1}^{n-1}F_i^2,
\]
and so the map
\begin{equation}
\label{un}
U: \{(D_{\h_n} h, F_1T_1^* h,\dots, F_{n-1} T_{n-1}^*h):
h\in\Hil\}\to \{(D_{\h_n} T_n^*h, F_1h,\dots, F_{n-1}h):h\in\Hil\}
\end{equation}
defined by
\[
U (D_{\h_n} h, F_1T_1^*h,\dots, F_{n-1}T_{n-1}^*h) = (D_{\h_n}
T_n^*h, F_1 h,\dots, F_{n-1}h),
\]
for all $h\in\Hil$, is an isometry. Set
\[
\clf_i=\overline{\Ran}F_i,
\]
for all $i=1,\dots,n-1$. Then, by adding an infinite dimensional Hilbert space $\clk$, if necessary, one can find a unitary
\begin{equation}\label{eq-U}
U: \cld_{\h_n}\oplus(\oplus_{i=1}^{n-1}\clf_i)\oplus \clk
\to \cld_{\h_n}\oplus(\oplus_{i=1}^{n-1}\clf_i)\oplus \clk,
\end{equation}
such that
\[
 U (D_{\h_n} h, F_1T_1^*h,\dots, F_{n-1}T_{n-1}^*h, 0_{\clk}) = (D_{\h_n}
T_n^*h, F_1 h,\dots, F_{n-1}h, 0_{\clk}),
\]
for all $h\in\Hil$. Finally, since
\[
\varSigma_{\hat{T}_{i,n}}^N(F_iF_i^*)=\prod_{k\neq i,n}(I_{\mathcal{B}(\Hil)}-
C_{T^N_k})(G_i)\leq G_i
\]
implies that $\varSigma_{\hat{T}_{i,n}}(F_iF_i^*)$ exists for all $i=1,\ldots,n-1$, the first part of the theorem follows from Theorem \ref{key theorem}.

\noindent In addition now assume that $T$ is finite rank. Then $\cld_{\h_n}$ and $\clf_i$, $i = 1, \ldots, n-1$, are
all finite dimensional Hilbert spaces. Then the unitary $U$ in \eqref{un} extends to a
unitary on $\cld_{\h_n}\oplus(\oplus_{i=1}^{n-1}\clf_i)$ which we also denote by $U$ and then by applying Theorem~\ref{key theorem} (with $\clk=0$), we have
\[
M_{\Phi}^*\Pi=\Pi T_n^*,
\]
where $\Phi$ is the transfer function corresponding to $U^*$. In this case, Lemma \ref{transfer} yields that
$\Phi$ is a $\mathcal{B}(D_{\h_n})$-valued inner multiplier.
This completes the proof of the theorem.
\end{proof}

\begin{rem}
It is worth mentioning that the converse of the above theorem
is also true. Indeed, if $\cle$ is a Hilbert space, $\Phi \in \cls_{n-1}(\cle, \cle)$ and $\clq \subseteq H^2_{\cle}(\D^{n-1})$ is a joint invariant subspace for
$(M_{z_1}^*, \ldots, M_{z_{n-1}}^*, M_{\Phi}^*)$, then \cite[Theorem 5.1]{BLTT} implies that
\[
(P_{\clq}M_{z_1}|_{\clq},\dots, P_{\clq}M_{z_{n-1}}|_{\clq},
P_{\clq}M_{\Phi}|_{\clq}) \in \p_n(\clq).
\]
\end{rem}

\section{von-Neumann inequality for finite rank tuples in $\p_n(\clh)$}
\label{section-vn ineq}

Now we turn to the von-Neumann inequality for finite rank tuples in $\p_n(\clh)$.

\begin{thm}\label{vN2}
Let $T\in\p_n(\clh)$ be a finite rank tuple. Then there exists an algebraic variety $V$ in $\overline{\D}^n$ such that
\[
\|p(T)\|\le \sup_{\z\in V}|p(\z)|,
\]
for all $p \in \Comp[z_1,\dots,z_n]$. If, in addition, $T_n$ is a pure contraction, then $V \subseteq \D^n$.
\end{thm}
\begin{proof}
Let $(M_{z_1},\dots, M_{z_{n-1}}, M_{\Phi(z_1,\dots,z_{n-1})})$ on
$H^2_{D_{\h_n}}(\D^{n-1})$ be the isometric dilation of $T$ as in Theorem~\ref{dilation2} and let
\[
\Phi(\z)= A^*+
C^*E(\z)(I_{\clf}-D^*E(\z))^{-1}B^* \quad \quad (\z \in \D^{n-1}),
\]
the transfer function corresponding to the unitary
\[
U^*=\begin{bmatrix} A^*& C^*\\ B^*& D^*\end{bmatrix}: D_{\h_n}\oplus
\clf \to
D_{\h_n}\oplus\clf
\]
as constructed in the proof of Theorem~\ref{dilation2} (see \eqref{eq-U} and note, in view of the assumption that $T$ is finite rank, that $\clk = \{0\}$). Let 
\[
A^*=\begin{bmatrix} W^*& 0\\0&
E^*\end{bmatrix} \; \mbox{on} \; \Hil_0\oplus \Hil_1=D_{\h_n},
\]
be the canonical decomposition of $A^*$ into the
unitary part $W^*$ on $\Hil_0$ and the completely non-unitary part
$E^*$ on $\Hil_1$. With
respect to the above decomposition of $A^*$, let
\[
\Phi(\z)=\begin{bmatrix} \Phi_0(\z)& 0\\0& \Phi_1(\z)\end{bmatrix}
\]
be the decomposition of $\Phi$, where
\[
\Phi_0(\z)\equiv W^* \quad \quad (\z\in\D^{n-1}),
\]
and
\[
\Phi_1(\z)=E^*+C^*E(\z)(I_{\clf}-D^*E(\z))^{-1}B^* \quad \quad (\z\in\D^{n-1}),
\]
is a multiplier in $H^{\infty}_{\mathcal{B}(\Hil_1)}(\D^{n-1})$. We set
\[
V_0:=\{\z\in\bar{\D}^n: \det(z_n I_{\Hil_0}-\Phi_0(z_1,\dots,z_{n-1}))=0\}= \D^{n-1}\times \sigma(W^*),
\]
and
\[
V_1:=\{\z\in\D^n: \det (z_nI_{\Hil_1}-\Phi_1(z_1,\dots,z_{n-1}))=0\}.
\]
Then for each $p \in \Comp[z_1,\dots,z_n]$ we have
\[
 \begin{split}
\|p(T)\| &\le \|p(M_{z_1},\dots,M_{z_{n-1}}, M_{\Phi(z_1,\dots,z_{n-1})})\|
\\
&=\| M_{p(z_1,\dots,z_{n-1},\Phi(z_1,\dots,z_{n-1}))}\|
\\
& = \sup_{\theta_1,\dots,\theta_{n-1}}\| p(e^{i\theta_1}I_{D_{\h_n}},\dots,e^{i\theta_{n-1}}I_{D_{\h_n}},\Phi(e^{i\theta_1},\dots,e^{i\theta_{n-1}}))\|.
\end{split}
\]
Moreover, for $j=0,1$, we have
\[
\begin{split}
&\sup_{\theta_1,\dots,\theta_{n-1}}\| p(e^{i\theta_1}I_{H_j},\dots,e^{i\theta_{n-1}}I_{H_j},\Phi_j(e^{i\theta_1},\dots,e^{i\theta_{n-1}}))\| \\
&= \sup_{\theta_1,\dots,\theta_{n-1}}\{|p(e^{i\theta_1},\dots,e^{i\theta_{n-1}},\lambda)|: \lambda\in \sigma(\Phi_j(e^{i\theta_1},\dots,e^{i\theta_{n-1}}))\}\\
& \le \|p\|_{\partial V_j}.
\end{split}
\]
Since $\Phi(e^{i\theta_1},\dots,e^{i\theta_{n-1}})=
\Phi_0(e^{i\theta_1},\dots,e^{i\theta_{n-1}})\oplus
\Phi_1(e^{i\theta_1},\dots,e^{i\theta_{n-1}})$, we have by
continuity and Lemma~\ref{transfer} that
\[
\|p(T)\|\le \sup_{\z\in V}|p(\z)|,
\]
where
\[
V= V_0\cup V_1.
\]
For the second part, we prove that $\clh_0=\{0\}$ which would imply that $V_0$ is the empty set. Let $\clq=\Pi(\Hil)$, where $\Pi$ is the canonical dilation map for $\h_n$. We claim that
\[
\clq \subseteq H^2_{\clh_1}(\D^{n-1}).
\]
Indeed, let $g\in H^2_{\clh_0}(\D^{n-1})$, $m \in \Z$ and let $g_m =
M_{\Phi_0}^{*m} g$. Then $g_m \in H^2_{\clh_0}(\D^{n-1})$ and
\[
g= M_{\Phi_0}^{ m} g_m.
\]
For $f\in\clq$, we have
\[
\langle g,f\rangle  = \langle M_{\Phi_0}^{m}g_m, f\rangle = \langle g_m, M_{\Phi_0}^{*m} f \rangle = \langle g_m, M_{\Phi}^{*m} f \rangle = \langle g_m , T_n^{* m} f \rangle \raro 0,
\]
as $m\to\infty$, as $T_n$ is pure. This implies that $\clq \subseteq H^2_{\clh_1}(\D^{n-1})$.
By the minimality of the isometric dilation of $\h_n$ (see Theorem \ref{th-dc dil}), we have
\[
\bigvee_{\bm{k} \in \Z^{n-1}} M_{\z}^{\bm{k}}\clq =
H^2_{\cld_{\h_n}}(\D^{n-1}).
\]
Clearly, $H^2_{\clh_1}(\D^{n-1}) \subseteq
H^2_{\cld_{\h_n}}(\D^{n-1})$ is a joint reducing subspace for $(M_{z_1},
\ldots, M_{z_{n-1}})$. Then we have $H^2_{\clh_0}(\D^{n-1})=\{0\}$ and
hence, $\clh_0=\{0\}$. This completes the proof of the theorem.
\end{proof}

The finiteness assumption for tuples in $\p_n(\clh)$ seems natural for refined (in terms of algebraic  varieties) von Neumann inequality. However, we do not know how to prove the existence of (explicit) isometric dilations for tuples without the finiteness assumption.

\vspace{0.3in}

\noindent\textbf{Acknowledgement:} The research of
the second named author is supported by DST-INSPIRE Faculty
Fellowship No. DST/INSPIRE/04/2015/001094. The research of the third named author is supported in part by NBHM grant NBHM/R.P.64/2014, and the Mathematical Research Impact Centric Support (MATRICS) grant, File No: MTR/2017/000522 and Core Research Grant, File No: CRG/2019/000908, by the Science and Engineering Research Board (SERB), Department of Science \& Technology (DST), Government of India.


\begin{thebibliography}{99}


\bibitem{AM1}
J. Agler and J. McCarthy, \emph{Distinguished Varieties}, Acta. Math.
194 (2005), 133-153.

\bibitem{AEM}
C. G. Ambrozie, M. Engli\v{s} ans V. M\"{u}ller, {\em Operator tuples
and analytic models over general domains in $\Comp^n$}, J. Operator
Theory 47 (2002),  287--302.

\bibitem{An}
T. Ando, {\em On a pair of commutative contractions}, Acta Sci.
Math. (Szeged) 24 (1963), 88--90.

\bibitem{BLTT}
J. A. Ball, W.S. Li, D. Timotin and T. T. Trent, {\em A commutant
lifting theorem on the polydisc}, Indiana Univ. Math. J. 48 (1999),
653--675.

\bibitem{BDHS}
S. Barik, B.K. Das, K.J. Haria and J. Sarkar,
{\em Isometric dilations and von Neumann inequality for a class of tuples in the polydisc}, Trans. Amer. Math. Soc. 372 (2019), 1429--1450.


\bibitem{CD}
M.-D. Choi and K. R. Davidson, {\em A $3 \times 3$ dilation
counterexample}, Bull. London Math. Soc. 45 (2013), 511--519.



\bibitem{CD1}
M. Crabb and A. Davie, {\em Von Neumann's inequality for Hilbert
space operators}, Bull. London Math. Soc. 7 (1975), 49--50.

\bibitem{CV1}
R. E. Curto and F.-H. Vasilescu, {\em Standard operator models in
the polydisc}, Indiana Univ. Math. J. 42 (1993), 791-810.

\bibitem{CV2}
R. E. Curto and F.-H. Vasilescu, {\em Standard operator models in the
polydisc}, II, Indiana Univ. Math. J. 44 (1995), 727-746.


\bibitem{DS}
B. K. Das and J. Sarkar, {\em Ando dilations, von Neumann inequality,
and distinguished varieties}, J. Funct. Anal. 272 (2017), 2114-2131.

\bibitem{DSS}
B. K. Das, J. Sarkar and S. Sarkar, {\em Factorizations of
contractions}, Adv. Math 322 (2017), 186-200.

\bibitem{Dr}
S. Drury, {\em Remarks on von Neumann's inequality}, Banach spaces,
harmonic analysis, and probability theory, Storrs, CT, 1980/1981,
Lecture Notes in Mathematics 995 (Springer, Berlin, 1983) 14--32.

\bibitem{FF}
C. Foias and A. Frazho, {\em The commutant lifting approach to interpolation problems}, Operator Theory: Advances and Applications, 44. Birkhuser Verlag, Basel, 1990.

\bibitem{VV}
A. Grinshpan, D.S. Kaliuzhnyi-Verbovetskyi, V. Vinnikov and H.J.
Woerdeman, {\em Classes of tuples of commuting contractions
satisfying the multivariable von Neumann inequality}, J. Funct.
Anal. 256 (2009), 3035-3054.

\bibitem{Ho1}
J. A. Holbrook, {\em Inequalities of von Neumann type for small
matrices}, Function spaces, Edwardsville, IL, 1990, Lecture Notes in
Pure and Applied Mathematics 136 (Dekker, New York, 1992), 189--193.


\bibitem{Ho2}
J. A. Holbrook, {\em Schur norms and the multivariate von Neumann
inequality}, Recent advances in operator theory and related topics,
Szeged, 1999, Operator Theory: Advances and Applications 127
(Birkh\"{a}user, Basel, 2001), 375--386.

\bibitem{K1}
G. Knese, {\em The von Neumann inequality for $3 \times 3$
matrices}, Bull. Lond. Math. Soc. 48 (2016), 53--57.

\bibitem{LK}
{\L}. Kosi\'{n}ski, {\em Three-point Nevanlinna Pick problem in the
polydisc}, Proc. Lond. Math. Soc. 111 (2015), 887--910.


\bibitem{MV}
V. M\"{u}ller and F.-H. Vasilescu, {\em Standard models for some
commuting multioperators}, Proc. Amer. Math. Soc. 117 (1993),
979-989.

\bibitem{Sarason}
D. Sarason, {\em Generalized interpolation in $H^\infty$}, Trans. Amer. Math. Soc. 127 (1967), 179-–203.

\bibitem{NF}
B. Sz.-Nagy and C. Foias, {\em Harmonic Analysis of Operators on
Hilbert Space. North-Holland}, Amsterdam-London, 1970.


\bibitem{Par}
S. Parrott, { \em Unitary dilations for commuting contractions},
Pacific J. Math. 34 (1970), 481--490.


\bibitem{V}
N. Th. Varopoulos, {\em On an inequality of von Neumann and an
application of the metric theory of tensor products to operator
theory}, J. Funct. Anal. 16 (1974), 83--100.


\end{thebibliography}
\end{document}